\documentclass[12pt]{article}
\usepackage[a4paper,margin=1in]{geometry}
\usepackage{authblk}
\usepackage{amsmath,amssymb,amsthm,mathtools}
\usepackage{mathrsfs}
\usepackage{enumitem}
\usepackage[hidelinks]{hyperref}

\newcommand{\cA}{\mathcal A}
\newcommand{\cC}{\mathcal C}
\newcommand{\cI}{\mathcal I}
\newcommand{\cP}{\mathscr P}
\newcommand{\cR}{\mathcal R}
\newcommand{\HH}{\mathscr H}
\newcommand{\EE}{\mathbb E}
\newcommand{\Hom}{\operatorname{Hom}}
\newcommand{\Ext}{\operatorname{Ext}}
\newcommand{\Aut}{\operatorname{Aut}}
\newcommand{\Ker}{\operatorname{Ker}}
\newcommand{\Coker}{\operatorname{Coker}}
\newcommand{\proj}{\operatorname{proj}}
\newcommand{\modu}{\operatorname{mod}}
\newcommand{\add}{\operatorname{add}}
\newcommand{\Ktwo}{K^{[-1,0]}}

\theoremstyle{plain}
\newtheorem{theorem}{Theorem}[section]
\newtheorem{proposition}[theorem]{Proposition}
\newtheorem{lemma}[theorem]{Lemma}
\newtheorem{corollary}[theorem]{Corollary}
\theoremstyle{remark}
\newtheorem{remark}[theorem]{Remark}

\title{Quantum Weyl Relations arising from Two-Term Complexes}

\author[1]{Qinghua Chen
}
\author[2]{Yonggang Hu\thanks{Corresponding author. Email: \texttt{huyonggang@cfau.edu.cn}}}

\affil[1]{School of Mathematics and Statistics, Fuzhou University,
Fuzhou 350108, China}
\affil[2]{Department of International Economics, China Foreign Affairs University,
Beijing 100037, China}
\date{}

\begin{document}
\maketitle

\begin{abstract}
Let $Q$ be a Dynkin quiver over $k=\mathbb F_q$, let $A=kQ$, and let
$\Ktwo(\cP)$ be the extriangulated category of two-term complexes of
projective $A$-modules. We study the square-root normalized Hall algebra of
$\Ktwo(\cP)$. We first establish a PBW-type vector-space factorization into
the Ringel--Hall part and the shifted-projective part, and derive an explicit
mixed multiplication formula. For the indecomposable projectives $P_i$, let
$p_i$ and $z_i$ be the normalized Hall classes of $(0\to P_i)$ and
$(P_i\to0)$, respectively. We prove
\[
 z_ip_i=q^{-1}p_iz_i+1,
\]
and determine all off-diagonal products $z_ip_j$ in terms of kernels and
cokernels of maps $P_i\to P_j$. Hence each pair $(p_i,z_i)$ generates a
rank-one quantum Weyl algebra, while every pairwise Hom-orthogonal family of
projectives generates a higher-rank quantum Weyl subalgebra. For these
subalgebras we construct explicit Hall--Fock modules, on which the $p_i$ act
as creation operators and the $z_i$ act as $q$-annihilation operators. This
provides a finite-field Hall model parallel to categorical Hall-type Weyl
actions in Donaldson--Thomas theory. Finally, we show that BGP reflection
transports the corresponding reflection subalgebras and preserves the mixed
Hall coefficients.
\end{abstract}

\noindent\textbf{Keywords:} Hall algebra; two-term complex; quantum Weyl algebra; BGP reflection.

\smallskip
\noindent\textbf{2020 Mathematics Subject Classification:} 16G20, 16S32, 18G80.

\section{Introduction}

Hall algebras turn extension data into multiplication. Ringel's realization
of $U_v^+(\mathfrak g_Q)$ from the Hall algebra of a Dynkin quiver initiated
the modern theory \cite{R90}; Green's bialgebra structure and Ringel's PBW
bases made the quantum-group structure explicit \cite{Gr95,R96}. Hall
algebras of complexes subsequently produced constructions beyond ordinary
module categories, most notably Bridgeland's realization of the whole
quantum group from two-periodic complexes \cite{B13}.

The category $\Ktwo(\cP)$ of two-term projective complexes is a natural
extriangulated category \cite{NP19,Gar24}. It is closely related to support
$\tau$-tilting theory \cite{AIR14}, and its Hall algebra is covered by the
Hall theory of extriangulated categories developed by Wang, Wei, and Zhang
\cite{WWZ22}. The aim of this paper is not to give a full presentation of
that Hall algebra. Instead, we isolate the Weyl-type relations already
visible among the projective boundary objects, construct their natural
Fock-type representations, and explain how the relations depend on the
orientation of $Q$.

For each $A$-module $M$, let $C_M$ be its minimal projective presentation,
and for each projective $P$, put $Z_P=(P\to0)$. Every two-term projective
complex is uniquely isomorphic to $C_M\oplus Z_P$. In the normalized Hall
basis, multiplication therefore gives a PBW-type factorization
\[
 \cR\otimes\cI\xrightarrow{\sim}\HH(\Ktwo(\cP)),
 \qquad c_M\otimes z_P\longmapsto c_Mz_P,
\]
as vector spaces. Here $\cR$ is the Ringel--Hall part and $\cI$ is the
shifted-projective part. We emphasize that this is a two-factor ordered
factorization, not a triangular decomposition in the usual three-part
sense.

The mixed product $z_Pc_M$ is governed by maps $P\to M$. The categorical
reason is particularly transparent: if $f:P\to M$ corresponds to an
$\mathbb E$-extension of $Z_P$ by $C_M$, then its middle term is
\[
 C_{\Coker f}\oplus Z_{\Ker f}.
\]
Thus the Hall product records the kernel--cokernel geometry of a module
homomorphism. Specializing to indecomposable projectives gives
\[
 z_ip_i=q^{-1}p_iz_i+1.
\]
For different vertices, a nonzero map $P_i\to P_j$ contributes the Hall
class of its cokernel. The orientation dependence is therefore produced by
the projective geometry of the quiver rather than imposed externally.

Weyl-type commutators also arise in geometric Hall constructions. Toda
constructed Hall-type algebra structures on categorical Donaldson--Thomas
theories for local surfaces and actions on Pandharipande--Thomas categories.
The corresponding Hecke operators provide creation and annihilation
operators whose commutator on $K$-theory is analogous to a Weyl algebra
relation \cite{Toda20}. Our construction takes place in a different,
finitary setting, but it exhibits the same broad mechanism: Hall-type
correspondences produce creation, annihilation, and a correction term. To
make this parallel precise on the representation-theoretic side, we
construct Hall--Fock modules for every projectively Hom-orthogonal family of
vertices. In these modules, $p_i$ acts by multiplication and $z_i$ by a
$q^{-1}$-difference operator.

Our first main theorem combines the PBW-type factorization, the explicit
mixed formula, and the Weyl consequences. The Hall--Fock theorem then turns
the Weyl subalgebras into explicit operator representations. A final theorem
describes BGP reflection. At a sink or source, the reflection functor induces
an isomorphism between the corresponding reflection subalgebras; its
restriction to the Ringel--Hall part agrees with Lusztig's braid symmetry
\cite{BGP73,XY01,DX02}.

A companion paper studies a genuinely different problem: in the
unnormalized twisted Hall basis, two Ringel--Hall embeddings are used to give
a complete presentation and to construct automorphisms and
anti-automorphisms. The present paper is instead concerned with explicit
normalized Hall coefficients, Weyl substructures, Fock-type modules, and
orientation covariance.
\section{Two-term complexes and normalized Hall multiplication}

Let $Q$ be a Dynkin quiver with vertices $1,\ldots,n$, let $k=\mathbb F_q$,
and put
\[
 A=kQ,\qquad \cA=A\text{-}\modu,\qquad \cP=\proj A,
 \qquad \cC=\Ktwo(\cP).
\]
For $M\in\cA$, fix a minimal projective resolution
\[
 0\longrightarrow P_M^{-1}\xrightarrow{d_M}P_M^0
 \longrightarrow M\longrightarrow0
\]
and set $C_M=(P_M^{-1}\xrightarrow{d_M}P_M^0)$. For $P\in\cP$, set
\[
 C_P=(0\longrightarrow P),\qquad Z_P=(P\longrightarrow0).
\]
The category $\cC$ carries the standard extriangulated structure induced
from $K^b(\cP)$, with
\[
 \EE(X,Y)=\Hom_{K^b(\cP)}(X,Y[1]).
\]

\begin{proposition}\label{prop:decomposition}
Every object of $\cC$ is uniquely isomorphic to $C_M\oplus Z_P$, with
$M\in\cA$ and $P\in\cP$. Moreover,
\begin{align*}
 \Hom_{\cC}(C_M,C_N)&\cong\Hom_A(M,N),&
 \EE(C_M,C_N)&\cong\Ext_A^1(M,N),\\
 \EE(Z_P,C_M)&\cong\Hom_A(P,M),&
 \EE(C_M,Z_P)&=0.
\end{align*}
The remaining Hom and negative-extension spaces are obtained by viewing
$C_M$ as $M$ and $Z_P$ as $P[1]$ in $D^b(\cA)$.
\end{proposition}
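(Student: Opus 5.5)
The plan is to use that $A=kQ$ is hereditary, so every map of projectives has projective kernel and image, and to work inside $K^b(\cP)\simeq D^b(\cA)$.

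\emph{Decomposition.} Take an object $X=(X^{-1}\xrightarrow{d}X^0)$ of $\cC$ and put $K=\Ker d$ and $M=\Coker d$. Since $A$ is hereditary, $\operatorname{Im} d\subseteq X^0$ is projective. Hence $X^{-1}\to\operatorname{Im}d$ splits, and $X^{-1}\cong K\oplus\operatorname{Im}d$. This writes $X\cong (K\to0)\oplus(\operatorname{Im}d\to X^0)$, where the second summand is injective and therefore a projective resolution of $M$. A projective resolution of $M$ is the minimal one $C_M$ plus contractible summands $(Q\xrightarrow{1}Q)$, and these are zero in the homotopy category. So $X\cong C_M\oplus Z_K$ in $\cC$.

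For uniqueness, note that the cohomology of $X$ in degrees $0$ and $-1$ is $M$ and $K$ respectively. These are homotopy invariants, and by Krull--Schmidt they determine $M$ and $P=K$ up to isomorphism. The point that needs care is that morphisms in $\cC$ are homotopy classes, so "isomorphic in $\cC$" must be argued through cohomology and not through the terms of the complex. In $D^b(\cA)$ this is just $X\cong H^0(X)\oplus H^{-1}(X)[1]$, which holds because $\cA$ is hereditary.

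\emph{Hom and $\EE$ spaces.} Use the fully faithful embedding $K^b(\cP)\simeq D^b(\cA)$, under which $C_M\mapsto M$ and $Z_P\mapsto P[1]$. This identification is also the final sentence of the statement. The four formulas then follow:
\begin{itemize}
\item $\Hom_{\cC}(C_M,C_N)=\Hom_{D}(M,N)=\Hom_A(M,N)$.
\item $\EE(C_M,C_N)=\Hom_D(M,N[1])=\Ext^1_A(M,N)$.
\item $\EE(Z_P,C_M)=\Hom_D(P[1],M[1])=\Hom_A(P,M)$.
\item $\EE(C_M,Z_P)=\Hom_D(M,P[2])=\Ext^2_A(M,P)=0$, since $\operatorname{gldim}A\le1$.
\end{itemize}

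One can also check the third formula directly with chain maps. A chain map $Z_P\to C_M[1]$ is a map $P\to P^0_M$ modulo those factoring through $d_M$ via a homotopy, and this quotient is $\Hom(P,M)$ because $P$ is projective.

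The main obstacle, though a mild one, is the homotopy-level uniqueness in the decomposition. The Hom and $\EE$ computations are formal once the derived-category identification is in place.
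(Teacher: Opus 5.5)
Your proposal is correct and takes essentially the same route as the paper. You use heredity to split off the projective kernel and discard contractible summands, then read off the Hom and $\EE$ spaces from $K^b(\cP)\simeq D^b(\cA)$ with $C_M\mapsto M$ and $Z_P\mapsto P[1]$; you just supply the details the paper delegates to Garcia's thesis (for uniqueness, $H^{-1}$ and $H^0$ already recover $P$ and $M$, so Krull--Schmidt is not needed).
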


\begin{proof}
Split the contractible summands of a two-term projective complex. Since $A$
is hereditary, the degree $-1$ homology is projective, which yields the
stated decomposition. The Hom and extension formulas follow from the
standard equivalence between projective resolutions and objects of
$D^b(\cA)$; see \cite{Gar24}.
\end{proof}

For $X\in\cC$, write $a_X=|\Aut_{\cC}(X)|$ and
\[
 \{X,Y\}=\prod_{r>0}|\EE^{-r}(X,Y)|^{(-1)^r}.
\]
The normalized Hall algebra $\HH(\cC)$ has basis $v_X$, indexed by
isomorphism classes, and multiplication
\[
 v_Xv_Y=\sum_{[L]}
 \frac{|\EE(X,Y)_L|}{|\Hom_{\cC}(X,Y)|}
 \left(\frac{a_L}{a_Xa_Y}\right)^{1/2}
 \frac{1}{\{X,Y\}}v_L.
\]
Set
\[
 c_M=v_{C_M},\qquad z_P=v_{Z_P},\qquad
 p_i=c_{P_i},\qquad z_i=z_{P_i}.
\]
Let
\[
 \cR=\operatorname{span}_{\mathbb C}\{c_M\mid M\in\cA\},\qquad
 \cI=\operatorname{span}_{\mathbb C}\{z_P\mid P\in\cP\}.
\]

\begin{proposition}[PBW-type factorization]\label{prop:factorization}
Multiplication induces a vector-space isomorphism
\[
 \cR\otimes\cI\xrightarrow{\sim}\HH(\cC),
 \qquad c_M\otimes z_P\longmapsto c_Mz_P.
\]
After fixing a convex order $\beta_1<\cdots<\beta_N$ on the positive roots,
the elements
\[
 E_{\beta_1}^{a_1}\cdots E_{\beta_N}^{a_N}
 z_1^{b_1}\cdots z_n^{b_n}
\]
form a basis of $\HH(\cC)$.
\end{proposition}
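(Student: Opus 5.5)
The plan is to show that the product $c_Mz_P$ is a single basis vector up to a nonzero scalar:
\[
 c_Mz_P=\lambda_{M,P}\,v_{C_M\oplus Z_P},\qquad \lambda_{M,P}\neq0 .
\]
Once this holds, Proposition~\ref{prop:decomposition} finishes the first claim. It says that $(M,P)\mapsto C_M\oplus Z_P$ is a bijection between pairs of isomorphism classes and isomorphism classes of objects of $\cC$. Hence the images of the basis $\{c_M\otimes z_P\}$ of $\cR\otimes\cI$ are nonzero multiples of the distinct basis vectors $v_X$ of $\HH(\cC)$, and the multiplication map is an isomorphism.

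To get the displayed formula, I apply the multiplication rule with $X=C_M$ and $Y=Z_P$. Proposition~\ref{prop:decomposition} gives $\EE(C_M,Z_P)=0$, so the only extension class is the split one. The sum therefore collapses to the single term $L=C_M\oplus Z_P$, and $|\EE(X,Y)_L|=1$. The remaining factors are $|\Hom_\cC(C_M,Z_P)|^{-1}$, the square root of automorphism orders, and $\{C_M,Z_P\}^{-1}$. These are positive reals, since all the Hom and negative-extension spaces involved are finite. So $\lambda_{M,P}>0$, and there is no real obstacle in this part. For completeness I will record $\lambda_{M,P}$ by computing the spaces in $D^b(\cA)$, viewing $Z_P$ as $P[1]$:
\[
 \Hom(M,P[1])=\Ext^1_A(M,P),\qquad \EE^{-r}(C_M,Z_P)=\Hom(M,P[1-r]),
\]
and the latter equals $\Hom(M,P)$ for $r=1$ and vanishes for $r\ge2$.

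For the PBW basis, the first step is to identify $\cR$ with the Ringel--Hall algebra. On $\cR$ the multiplication uses $\EE(C_M,C_N)\cong\Ext^1_A(M,N)$ and $\Hom_\cC(C_M,C_N)\cong\Hom_A(M,N)$, together with $\{C_M,C_N\}=1$. The last equality holds because the negative extensions $\Hom(M,N[-r])$ vanish for $r>0$. One must also check that the middle terms of these extensions are again of the form $C_L$. This follows because $\EE(C_M,C_N)$-extensions come from module extensions, and the realization of the middle term in $\cC$ has no $Z$-summand, since the degree $-1$ homology vanishes. So $\cR$ is a subalgebra isomorphic to the square-root twisted Ringel--Hall algebra of $\cA$. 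By Ringel's PBW theorem, for Dynkin $Q$ and a convex order, the ordered monomials $E_{\beta_1}^{a_1}\cdots E_{\beta_N}^{a_N}$ equal $c_{M}$ up to a nonzero scalar plus a combination of $c_{M'}$ with $M'$ of strictly larger degeneration order. This family is therefore a basis of $\cR$.

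On the $\cI$ side, the same collapse argument applies to $z_Pz_{P'}$. Here
\[
 \EE(Z_P,Z_{P'})=\Hom(P[1],P'[2])=\Ext^1_A(P,P')=0,
\]
so $z_Pz_{P'}$ is a positive multiple of $z_{P\oplus P'}$. In particular $z_1^{b_1}\cdots z_n^{b_n}$ is a nonzero multiple of $z_{\bigoplus P_i^{b_i}}$, and these monomials form a basis of $\cI$. Tensoring the two bases and applying the isomorphism $\cR\otimes\cI\to\HH(\cC)$ proves the second claim.

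The step needing most care is the compatibility check that $\cR$ is closed under multiplication with the Ringel--Hall structure constants. I will handle it by citing the equivalence of Proposition~\ref{prop:decomposition}, which says that $\cC$ restricted to objects without $Z$-summands is equivalent to $\cA$ compatibly with $\EE$ and $\Ext^1$. The PBW statement for the Ringel--Hall algebra itself is taken from \cite{R96}.
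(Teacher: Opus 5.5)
Your proposal is correct and follows the paper's own argument. $\EE(C_M,Z_P)=0$ forces $c_Mz_P$ to be a positive multiple of $v_{C_M\oplus Z_P}$, Proposition~\ref{prop:decomposition} turns this into the isomorphism $\cR\otimes\cI\cong\HH(\cC)$, and Ringel's PBW theorem applied to $\cR$ gives the basis. You also supply details the paper leaves implicit: $\cR$ is closed under multiplication with $\{C_M,C_N\}=1$, and $z_1^{b_1}\cdots z_n^{b_n}$ is a nonzero multiple of $z_{\bigoplus_i P_i^{b_i}}$ because $\EE(Z_P,Z_{P'})\cong\Ext^1_A(P,P')=0$.
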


\begin{proof}
The subspace $\cR$ is, after a diagonal change of basis, the twisted
Ringel--Hall algebra, so Ringel's PBW theorem applies. Extensions of an
object $C_M$ by an injective object $Z_P$ split, and $c_Mz_P$ is a nonzero
scalar multiple of $v_{C_M\oplus Z_P}$. The result follows from
Proposition~\ref{prop:decomposition}.
\end{proof}

\section{Mixed multiplication and quantum Weyl subalgebras}

For $P\in\cP$ and $M\in\cA$, define
\[
 {}_Q\Hom_A(P,M)_L=
 \{f:P\to M\mid\Ker f\cong Q,\ \Coker f\cong L\}.
\]

\begin{lemma}[Middle terms of mixed extensions]\label{lem:middle-term}
Under the natural isomorphism
\[
 \EE(Z_P,C_M)\cong\Hom_A(P,M),
\]
let $\delta_f$ correspond to $f:P\to M$. If
\[
 Q=\Ker f,\qquad L=\Coker f,
\]
then every realization of $\delta_f$ has middle term isomorphic to
\[
 C_L\oplus Z_Q.
\]
Consequently,
\[
 |\EE(Z_P,C_M)_{C_L\oplus Z_Q}|
 =|{}_Q\Hom_A(P,M)_L|.
\]
\end{lemma}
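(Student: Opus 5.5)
We work in $K^b(\cP)$, which contains $\cC$, and realize $\delta_f$ by a cone. Let $C_M=(P_M^{-1}\xrightarrow{d_M}P_M^0)$ and $Z_P=P[1]$, with $P$ in degree $-1$.

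\textbf{Step 1: lift $f$ to a chain map.} Since $P$ is projective and $P_M^0\to M$ is surjective, the map $f:P\to M$ lifts to $\tilde f:P\to P_M^0$. Viewed as a morphism $Z_P[-1]=(0\to P)\to C_M$ in $K^b(\cP)$, this $\tilde f$ represents $\delta_f\in\Hom(Z_P,C_M[1])$ under the isomorphism of Proposition~\ref{prop:decomposition}. The lift is unique up to maps through $d_M$, which are exactly the null-homotopic ones.

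\textbf{Step 2: compute the cone.} The realizing conflation is $C_M\to E\to Z_P$, and its middle term is
\[
E=\operatorname{Cone}(\tilde f)=\bigl(P_M^{-1}\oplus P\xrightarrow{(d_M,\tilde f)}P_M^0\bigr).
\]
This is a two-term complex, so $E\in\cC$. The realization is unique up to isomorphism of conflations, so every realization has middle term isomorphic to $E$.

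\textbf{Step 3: read off the decomposition of $E$.} By Proposition~\ref{prop:decomposition}, $E\cong C_{H^0(E)}\oplus Z_{H^{-1}(E)}$. The summand $Z_{H^{-1}(E)}$ arises because $H^{-1}(E)$ is projective, $A$ being hereditary. So it suffices to show $H^0(E)\cong\Coker f$ and $H^{-1}(E)\cong\Ker f$.

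For $H^0$: the image of $(d_M,\tilde f)$ is $\operatorname{im}d_M+\tilde f(P)$. Hence
\[
H^0(E)=P_M^0/(\operatorname{im}d_M+\tilde f(P))\cong M/f(P)=L.
\]
For $H^{-1}$: consider pairs $(x,y)$ with $d_M x+\tilde f y=0$. Then $f(y)=0$, so $y\in\Ker f$. Conversely, given $y\in\Ker f$, we have $\tilde f y\in\operatorname{im}d_M$, and $x$ is uniquely determined because $d_M$ is injective. So projection to $y$ gives $H^{-1}(E)\cong\Ker f=Q$. Equivalently, this follows from the long exact homology sequence of the triangle $C_M\to E\to P[1]$:
\[
0\to H^{-1}(E)\to P\xrightarrow{f}M\to H^0(E)\to 0,
\]
where the connecting map is $f$. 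The cleanest way to justify that identification is the explicit computation above.

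\textbf{Step 4: count.} The previous steps show that the isomorphism class of the middle term of $\delta_f$ is $C_{\Coker f}\oplus Z_{\Ker f}$. By the uniqueness in Proposition~\ref{prop:decomposition}, this equals $C_L\oplus Z_Q$ exactly when $\Ker f\cong Q$ and $\Coker f\cong L$. Since $\EE(Z_P,C_M)\cong\Hom_A(P,M)$ is a bijection, the set $\EE(Z_P,C_M)_{C_L\oplus Z_Q}$ corresponds bijectively to ${}_Q\Hom_A(P,M)_L$, which gives the cardinality formula.

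\textbf{Main obstacle.} The one delicate point is in Step 1: checking that the isomorphism $\EE(Z_P,C_M)\cong\Hom_A(P,M)$ of Proposition~\ref{prop:decomposition} really sends $\delta_f$ to the class of the lift $\tilde f$, and not to some twisted version. I would settle this by taking it as the definition of the isomorphism, namely $f\mapsto[\tilde f]$. Then check that the map is well defined and bijective: a chain map $(0\to P)\to C_M$ is just a map $P\to P_M^0$, homotopies are maps $P\to P_M^{-1}$, and so $\Hom_{K^b}(P,C_M)\cong\Hom_A(P,P_M^0)/d_M\circ\Hom_A(P,P_M^{-1})\cong\Hom_A(P,M)$ by projectivity of $P$.
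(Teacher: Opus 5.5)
Your proposal is correct and follows the paper's proof: you lift $f$ to $\tilde f:P\to P_M^0$, identify the middle term as the cone $\bigl(P\oplus P_M^{-1}\to P_M^0\bigr)$ (the paper's $\operatorname{Cone}(\widetilde f)[-1]$ with $\widetilde f$ viewed as $Z_P\to C_M[1]$), read off $H^{-1}\cong\Ker f$ and $H^0\cong\Coker f$, and then invoke Proposition~\ref{prop:decomposition} and the bijection $\EE(Z_P,C_M)\cong\Hom_A(P,M)$ to get the count. Your explicit kernel/cokernel computation and your check that lifts are unique up to null-homotopy only spell out details the paper leaves implicit in its appeal to the exact sequence $0\to Q\to P\to M\to L\to 0$.
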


\begin{proof}
Let
\[
 0\longrightarrow P_M^{-1}\xrightarrow{d_M}P_M^0
 \xrightarrow{\pi_M}M\longrightarrow0
\]
be the fixed minimal projective resolution of $M$. Choose a lifting
$\widetilde f:P\to P_M^0$ satisfying $\pi_M\widetilde f=f$. The extension
corresponding to $f$ is represented by the associated morphism
$Z_P\to C_M[1]$, and its middle term is
\[
 X_f\simeq\operatorname{Cone}(\widetilde f)[-1]
 =\left(P\oplus P_M^{-1}
 \xrightarrow{(\widetilde f,d_M)}P_M^0\right).
\]
The exact sequence
\[
 0\longrightarrow Q\longrightarrow P\xrightarrow{f}M
 \longrightarrow L\longrightarrow0
\]
and the definition of $X_f$ give
\[
 H^{-1}(X_f)\cong Q,
 \qquad
 H^0(X_f)\cong L.
\]
Since $A$ is hereditary, Proposition~\ref{prop:decomposition} yields
\[
 X_f\cong C_{H^0(X_f)}\oplus Z_{H^{-1}(X_f)}
 \cong C_L\oplus Z_Q.
\]
The final equality follows from the bijection
$\EE(Z_P,C_M)\cong\Hom_A(P,M)$.
\end{proof}

\begin{lemma}[Mixed multiplication]\label{lem:mixed}
For $P\in\cP$ and $M\in\cA$,
\[
 z_Pc_M=
 \sum_{[Q],[L]}
 |{}_Q\Hom_A(P,M)_L|
 \frac{|\Ext_A^1(L,Q)|}{|\Hom_A(L,Q)|}
 \left(\frac{a_La_Q}{a_Pa_M}\right)^{1/2}
 c_Lz_Q.
\]
\end{lemma}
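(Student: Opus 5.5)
We would prove the formula by expanding $z_Pc_M=v_{Z_P}v_{C_M}$ directly from the defining multiplication and then re-expressing each middle term in the PBW basis of Proposition~\ref{prop:factorization}.

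\emph{Step 1: expand the product.} By the definition of the normalized Hall product,
\[
 z_Pc_M=\sum_{[X]}\frac{|\EE(Z_P,C_M)_X|}{|\Hom_{\cC}(Z_P,C_M)|}
 \left(\frac{a_X}{a_{Z_P}a_{C_M}}\right)^{1/2}\frac{1}{\{Z_P,C_M\}}\,v_X .
\]
We simplify the prefactors using Proposition~\ref{prop:decomposition}, viewing $Z_P$ as $P[1]$ and $C_M$ as $M$ in $D^b(\cA)$. Then $\Hom_{\cC}(Z_P,C_M)=\Hom_{D^b}(P[1],M)=\Ext^{-1}(P,M)=0$, so this factor is $1$. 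Likewise $\EE^{-r}(Z_P,C_M)=\Hom(P[1],M[-r])=\Ext^{-1-r}(P,M)=0$ for $r>0$, so $\{Z_P,C_M\}=1$. The automorphism groups satisfy $a_{Z_P}=a_P$ and $a_{C_M}=a_M$, since $\Hom_{\cC}(C_M,C_M)\cong\Hom_A(M,M)$ and $\Hom_{\cC}(Z_P,Z_P)\cong\Hom_A(P,P)$.

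\emph{Step 2: identify the middle terms.} By Lemma~\ref{lem:middle-term}, only $X=C_L\oplus Z_Q$ occurs. Its multiplicity is $|{}_Q\Hom_A(P,M)_L|$, summed over pairs $([Q],[L])$. Different pairs give non-isomorphic $X$ by the uniqueness in Proposition~\ref{prop:decomposition}, so the reindexing is bijective.

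\emph{Step 3: rewrite $v_{C_L\oplus Z_Q}$ as a multiple of $c_Lz_Q$.} We compute $c_Lz_Q=v_{C_L}v_{Z_Q}$ by the same formula.
\begin{itemize}
 \item Since $\EE(C_L,Z_Q)=0$, the only middle term is $C_L\oplus Z_Q$, with $|\EE(C_L,Z_Q)|=1$.
 \item $\Hom_{\cC}(C_L,Z_Q)=\Hom_{D^b}(L,Q[1])=\Ext^1_A(L,Q)$.
 \item $\EE^{-1}(C_L,Z_Q)=\Hom(L,Q)$, and $\EE^{-r}=\Ext^{1-r}(L,Q)=0$ for $r\ge2$. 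Hence $\{C_L,Z_Q\}=|\Hom_A(L,Q)|^{-1}$.
\end{itemize}
Combining these gives
\[
 c_Lz_Q=\frac{|\Hom_A(L,Q)|}{|\Ext^1_A(L,Q)|}\left(\frac{a_{C_L\oplus Z_Q}}{a_La_Q}\right)^{1/2}v_{C_L\oplus Z_Q}.
\]

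\emph{Step 4: substitute.} Solving Step 3 for $v_{C_L\oplus Z_Q}$ and inserting it into Step 1, the factor $a_{C_L\oplus Z_Q}^{1/2}$ cancels. The coefficient of $c_Lz_Q$ becomes
\[
 |{}_Q\Hom_A(P,M)_L|\cdot\frac{|\Ext^1_A(L,Q)|}{|\Hom_A(L,Q)|}\left(\frac{a_La_Q}{a_Pa_M}\right)^{1/2},
\]
which is exactly the claimed formula.

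The main obstacle is the bookkeeping of $\Hom_{\cC}$ and the negative extensions $\EE^{-r}$ between $C$- and $Z$-objects. One must check the sign and shift conventions carefully: $\EE^{-r}(X,Y)=\Hom_{K^b}(X,Y[-r])$, and for $r\ge1$ this group can be nonzero only via $\Hom(L,Q)$ in the $(C_L,Z_Q)$ case. One must also confirm that $\Hom_{\cC}(C_L,Z_Q)$ is really $\Ext^1_A(L,Q)$ and not a homotopy quotient that differs from it. This identification uses the equivalence of $\cC$ with the full subcategory of $D^b(\cA)$ on $\cA\vee\cP[1]$, as asserted in Proposition~\ref{prop:decomposition}.
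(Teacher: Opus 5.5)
Your proposal is correct and takes the same route as the paper. You expand $v_{Z_P}v_{C_M}$ by the normalized Hall formula, count middle terms via Lemma~\ref{lem:middle-term}, and read off the Hom, negative-extension and automorphism factors from Proposition~\ref{prop:decomposition}. Your Step~3 rewrites $v_{C_L\oplus Z_Q}$ as a multiple of $c_Lz_Q$, which produces the factor $|\Ext_A^1(L,Q)|/|\Hom_A(L,Q)|$; this only makes explicit a conversion that the paper's one-line proof leaves implicit.
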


\begin{proof}
By Lemma~\ref{lem:middle-term}, the extensions with middle term
$C_L\oplus Z_Q$ are precisely those arising from maps $P\to M$ with kernel
$Q$ and cokernel $L$. Substituting this count, together with the Hom,
negative-extension, and automorphism factors from
Proposition~\ref{prop:decomposition}, into the normalized Hall product gives
the formula.
\end{proof}

Put
\[
 h_{ij}=\dim_k\Hom_A(P_i,P_j).
\]
Because the underlying Dynkin graph is a tree, $h_{ij}\in\{0,1\}$, and for
$i\ne j$ at most one of $h_{ij},h_{ji}$ is nonzero.

\begin{theorem}[Main theorem]\label{thm:main}
The normalized Hall algebra has the PBW-type factorization of
Proposition~\ref{prop:factorization}, and the projective pairs $p_i,z_i$
satisfy the following relations.
\begin{enumerate}[label=\rm(\alph*)]
\item For all $i,j$,
\[
 p_ip_j=q^{h_{ji}-h_{ij}}p_jp_i,
 \qquad
 z_iz_j=q^{h_{ji}-h_{ij}}z_jz_i.
\]
\item For every $i$,
\[
 z_ip_i=q^{-1}p_iz_i+1.
\]
\item If $i\ne j$ and $h_{ij}=0$, then
\[
 z_ip_j=q^{-h_{ji}}p_jz_i.
\]
If $h_{ij}=1$, choose a nonzero map $P_i\to P_j$, let $L_{ij}$ be its
cokernel, and put
\[
 \gamma_{ij}=(q-1)
 \left(\frac{a_{L_{ij}}}{a_{P_i}a_{P_j}}\right)^{1/2}.
\]
Then
\[
 z_ip_j=p_jz_i+\gamma_{ij}c_{L_{ij}}.
\]
\item For every $i$, the subalgebra $\mathbb C\langle p_i,z_i\rangle$ is
isomorphic to
\[
 A_1(q)=\mathbb C\langle x,\partial\rangle/
 (\partial x-q^{-1}x\partial-1).
\]
More generally, if $I\subseteq Q_0$ is pairwise projectively
Hom-orthogonal, then
\[
 \mathbb C\langle p_i,z_i\mid i\in I\rangle
 \cong\bigotimes_{i\in I}A_1(q).
\]
\end{enumerate}
\end{theorem}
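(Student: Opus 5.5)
The plan is to get every relation from Lemma~\ref{lem:mixed}, from the definition of the normalized Hall product, and from Proposition~\ref{prop:decomposition}. The PBW statement is just Proposition~\ref{prop:factorization}.

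\emph{Step 1: part (a).} Since $\Ext_A^1(P_i,P_j)=0$ and $\EE(Z_{P_i},Z_{P_j})\cong\Ext^1_A(P_i,P_j)=0$, each product $p_ip_j$ and $z_iz_j$ has a single term. Namely,
\[
 p_ip_j=\frac{1}{|\Hom_A(P_i,P_j)|}\Bigl(\frac{a_{P_i\oplus P_j}}{a_{P_i}a_{P_j}}\Bigr)^{1/2}c_{P_i\oplus P_j}.
\]
The factor $\{C_{P_i},C_{P_j}\}$ is $1$: for modules, negative extensions vanish in $D^b(\cA)$. The same holds for the $Z$'s, because $\EE^{-r}(Z_{P_i},Z_{P_j})=\Hom_{D^b}(P_i,P_j[-r])=0$ for $r>0$. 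Swapping $i$ and $j$ changes only the factor $|\Hom|^{-1}$, from $q^{-h_{ij}}$ to $q^{-h_{ji}}$. This gives $p_ip_j=q^{h_{ji}-h_{ij}}p_jp_i$, and likewise for the $z$'s.

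\emph{Step 2: parts (b) and (c).} Apply Lemma~\ref{lem:mixed} with $P=P_i$ and $M=P_j$. The key structural input is that a nonzero map between indecomposable projectives over the hereditary algebra $kQ$ is injective, because its image is a projective, hence direct, summand-free submodule and $h_{ij}\le 1$. So the maps split into two classes.
\begin{itemize}[label=$\circ$]
\item The zero map has kernel $P_i$ and cokernel $P_j$. Its coefficient is $|\Ext^1_A(P_j,P_i)|/|\Hom_A(P_j,P_i)|=q^{-h_{ji}}$, with the square-root factor equal to $1$. The resulting term $c_{P_j}z_{P_i}$ must be identified with $p_jz_i$. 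This holds because $\EE(C_{P_j},Z_{P_i})=0$ and $\Hom_{\cC}(C_{P_j},Z_{P_i})=0$, so $p_jz_i=v_{C_{P_j}\oplus Z_{P_i}}$ exactly, with the automorphism ratio equal to $1$ since there are no maps in either direction that enlarge $\Aut$. This last point needs to be checked carefully.
\item There are $q-1$ nonzero maps (when $h_{ij}=1$). Each has kernel $0$ and the same cokernel $L_{ij}$ up to isomorphism, since the maps differ by scalars. Their total contribution is $(q-1)(a_{L_{ij}}/(a_{P_i}a_{P_j}))^{1/2}c_{L_{ij}}$.
\end{itemize}
For $i=j$, the zero map gives $q^{-1}p_iz_i$. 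The $q-1$ automorphisms give cokernel $0$ and kernel $0$, with coefficient $(q-1)\cdot\bigl(1/a_{P_i}^2\bigr)^{1/2}=1$ since $a_{P_i}=q-1$. This yields (b). For $i\neq j$ with $h_{ij}=1$, we have $h_{ji}=0$, so the zero-map term is exactly $p_jz_i$, giving (c). If $h_{ij}=0$, only the zero map occurs.

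\emph{Step 3: part (d).} The relations of (b) give a surjection $A_1(q)\to\mathbb C\langle p_i,z_i\rangle$. For $I$ Hom-orthogonal we have $h_{ij}=h_{ji}=0$, so by (a) and (c) all generators with distinct indices commute. This gives a surjection $\bigotimes_{i\in I}A_1(q)\to\mathbb C\langle p_i,z_i\mid i\in I\rangle$. For injectivity, use that $\bigotimes A_1(q)$ has basis $\prod x_i^{a_i}\prod\partial_i^{b_i}$. By the argument of Step 1, iterated using Ext-vanishing among projectives and among the $Z_P$, the product $\prod p_i^{a_i}$ is a nonzero multiple of $c_{\oplus P_i^{a_i}}$, and $\prod z_i^{b_i}$ is a nonzero multiple of $z_{\oplus P_i^{b_i}}$. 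By Proposition~\ref{prop:factorization} these products are nonzero multiples of distinct basis vectors, hence linearly independent.

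The main obstacle is bookkeeping of the normalization factors. This means confirming that all the $\{X,Y\}$ factors and the automorphism ratios collapse as claimed. In particular, $p_jz_i$ must be exactly $v_{C_{P_j}\oplus Z_{P_i}}$, and this is where $\Hom_{\cC}(Z_{P_i},C_{P_j})=\Hom_{D^b}(P_i[1],P_j)=0$ versus $\Hom_{\cC}(C_{P_j},Z_{P_i})$ must be checked via Proposition~\ref{prop:decomposition}.
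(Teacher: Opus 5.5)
Your proposal is correct and follows the paper's own proof: single-term split products for (a), Lemma~\ref{lem:mixed} with the zero map versus the $q-1$ nonzero (injective) maps $P_i\to P_j$ for (b) and (c), and the basis $\{c_Mz_P\}$ of Proposition~\ref{prop:factorization} for injectivity in (d). The one correction is that your ``main obstacle'' is spurious and should be dropped: Lemma~\ref{lem:mixed} is already stated in terms of the products $c_Lz_Q$, so the zero-map term $c_{P_j}z_{P_i}$ equals $p_jz_i$ by definition; moreover, the identity $p_jz_i=v_{C_{P_j}\oplus Z_{P_i}}$ that you propose to verify is actually false when $h_{ji}=1$, because $1/\{C_{P_j},Z_{P_i}\}=|\EE^{-1}(C_{P_j},Z_{P_i})|=|\Hom_A(P_j,P_i)|$, which gives for instance $p_iz_i=q\,v_{C_{P_i}\oplus Z_{P_i}}$.
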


\begin{proof}
The relations among the $p_i$ and among the $z_i$ follow by evaluating the
split Hall products in both orders. For $z_ip_j$, apply
Lemma~\ref{lem:mixed}. If $i=j$, the zero endomorphism gives the split term
and the $q-1$ invertible endomorphisms have contractible middle term, giving
$z_ip_i=q^{-1}p_iz_i+1$. If $i\ne j$ and $h_{ij}=0$, only the zero map
$P_i\to P_j$ occurs, and its coefficient is $q^{-h_{ji}}$. If $h_{ij}=1$,
all nonzero maps are scalar multiples of one injective map and have the same
cokernel $L_{ij}$, yielding the correction term.

The ordered monomials $p_i^az_i^b$ occur among the basis elements of
Proposition~\ref{prop:factorization}, so the rank-one Weyl homomorphism is
injective. The same basis argument proves the Hom-orthogonal higher-rank
statement.
\end{proof}

\begin{remark}
Theorem~\ref{thm:main} does not identify the entire Hall algebra with the
standard $n$-th quantum Weyl algebra. In a non-semisimple Dynkin type, the
additional positive-root Hall elements in the PBW basis give more directions
than the $2n$ standard Weyl variables. The theorem identifies canonical Weyl
subalgebras and the quiver correction terms between them.
\end{remark}

\section{Hall--Fock modules and creation--annihilation operators}

Let $I\subseteq Q_0$ be projectively Hom-orthogonal, that is,
\[
 \Hom_A(P_i,P_j)=\Hom_A(P_j,P_i)=0
 \qquad(i\ne j,\ i,j\in I).
\]
By Theorem~\ref{thm:main}, the subalgebra
\[
 \mathcal W_I(q)=\mathbb C\langle p_i,z_i\mid i\in I\rangle
\]
is isomorphic to the tensor product of the rank-one quantum Weyl algebras
$A_1(q)$. Define the left ideal
\[
 \mathcal J_I=\sum_{i\in I}\mathcal W_I(q)z_i
\]
and the cyclic left module
\[
 \mathcal F_I=\mathcal W_I(q)/\mathcal J_I.
\]
Let $|0\rangle=1+\mathcal J_I$ be its vacuum vector. For
$\mathbf m=(m_i)_{i\in I}\in\mathbb N^I$, put
\[
 |\mathbf m\rangle=\prod_{i\in I}p_i^{m_i}|0\rangle.
\]
The product is independent of the chosen order because the $p_i$, $i\in I$,
commute.

\begin{theorem}[Hall--Fock representation]\label{thm:fock}
The following statements hold.
\begin{enumerate}[label=\rm(\alph*)]
\item The vectors $|\mathbf m\rangle$, $\mathbf m\in\mathbb N^I$, form a
basis of $\mathcal F_I$.
\item The elements $p_i$ act as creation operators:
\[
 p_i|\mathbf m\rangle=|\mathbf m+\mathbf e_i\rangle.
\]
\item The elements $z_i$ act as $q$-annihilation operators:
\[
 z_i|\mathbf m\rangle
 =[m_i]_{q^{-1}}|\mathbf m-\mathbf e_i\rangle,
\]
where
\[
 [m]_{q^{-1}}=1+q^{-1}+\cdots+q^{-(m-1)}
 =\frac{1-q^{-m}}{1-q^{-1}},
\]
and the right-hand side is zero when $m_i=0$.
\end{enumerate}
\end{theorem}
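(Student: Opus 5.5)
The plan is to use the isomorphism $\mathcal W_I(q)\cong\bigotimes_{i\in I}A_1(q)$ of Theorem~\ref{thm:main}(d) throughout, and reduce every statement to an ordered-monomial normal form in $\mathcal W_I(q)$. By Theorem~\ref{thm:main}(a),(c), for $i\ne j$ in $I$ we have $h_{ij}=h_{ji}=0$, so $p_i,p_j$ commute, $z_i,z_j$ commute, and $z_ip_j=p_jz_i$. Together with (b), the ordered monomials $\prod_{i}p_i^{a_i}\prod_i z_i^{b_i}$ span $\mathcal W_I(q)$. They are linearly independent because they appear among the PBW basis elements of Proposition~\ref{prop:factorization}: the $p_i$ are the classes $E_{\alpha}$ of projective roots, and the products of commuting $p_i$ are nonzero scalar multiples of single basis vectors. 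Hence they form a basis.

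For (c), first prove by induction on $m$ the commutation formula
\[
 z_ip_i^m=q^{-m}p_i^mz_i+[m]_{q^{-1}}p_i^{m-1}.
\]
The case $m=1$ is Theorem~\ref{thm:main}(b). For the induction step, write $z_ip_i^{m+1}=(z_ip_i^m)p_i$, substitute the formula for $m$, and apply (b) once more; the resulting coefficient is $q^{-m}+[m]_{q^{-1}}=[m+1]_{q^{-1}}$. Since $z_i$ commutes with $p_j$ for $j\ne i$, and $z_i|0\rangle=0$ by definition of $\mathcal J_I$, applying $z_i$ to $|\mathbf m\rangle$ gives exactly $[m_i]_{q^{-1}}|\mathbf m-\mathbf e_i\rangle$. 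This is zero when $m_i=0$. Part (b) is immediate from the definition and commutativity of the $p_i$.

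For (a), spanning follows from the normal form: each ordered monomial with some $b_i>0$ ends in a $z_i$, so it lies in $\mathcal J_I$. Hence $\mathcal F_I$ is spanned by the images of $\prod p_i^{a_i}$, which are the $|\mathbf m\rangle$.

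Linear independence is the main step. The cleanest route is to show $\mathcal J_I=\operatorname{span}\{\prod p^{a}\prod z^{b}: b\ne0\}$. The inclusion $\supseteq$ is clear. For $\subseteq$, note that any $w z_i$, with $w$ written in normal form, is again a combination of normal monomials with some $z$-exponent positive, because right multiplication by $z_i$ only increases $b_i$ (the $z$'s commute). The basis from the first paragraph then gives $\mathcal W_I(q)=\operatorname{span}\{p^{a}\}\oplus\mathcal J_I$, and so the $|\mathbf m\rangle$ are independent.

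As a cross-check, I would also construct the representation directly on the polynomial space $\mathbb C[x_i]$, with $p_i$ acting by multiplication and $z_i$ by the $q^{-1}$-difference operator $f\mapsto (f(x)-f(q^{-1}x_i))/(x_i-q^{-1}x_i)$. This realization satisfies the relations and kills $1$ under each $z_i$, so the surjection $\mathcal F_I\to\mathbb C[x_i]$ sends $|\mathbf m\rangle$ to independent monomials.
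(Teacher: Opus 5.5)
Your proposal is correct and follows essentially the same route as the paper. Both arguments use the ordered-monomial PBW basis of $\mathcal W_I(q)$, observe that only the pure $p$-monomials survive modulo $\mathcal J_I$, and apply $z_ip_i^m=q^{-m}p_i^mz_i+[m]_{q^{-1}}p_i^{m-1}$ to the vacuum. You also make explicit why $\mathcal J_I$ is exactly the span of normal monomials with some $z$-exponent positive, which the paper leaves implicit, and your polynomial cross-check is the paper's Corollary~\ref{cor:polynomial} run in the reverse direction.
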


\begin{proof}
The PBW basis of $\mathcal W_I(q)$ consists of the ordered monomials
\[
 \prod_{i\in I}p_i^{a_i}\prod_{i\in I}z_i^{b_i}.
\]
Modulo $\mathcal J_I$, precisely the monomials with all $b_i=0$ survive,
which proves (a). Part (b) is immediate. The relation
\[
 z_ip_i=q^{-1}p_iz_i+1
\]
implies, by induction on $m$,
\[
 z_ip_i^m=q^{-m}p_i^mz_i+[m]_{q^{-1}}p_i^{m-1}.
\]
For $j\ne i$ in $I$, one has $z_ip_j=p_jz_i$. Applying the displayed
identity to the vacuum vector, which is annihilated by every $z_i$, proves
(c).
\end{proof}

\begin{corollary}[Polynomial realization]\label{cor:polynomial}
There is a vector-space isomorphism
\[
 \mathcal F_I\xrightarrow{\sim}\mathbb C[t_i\mid i\in I],
 \qquad
 |\mathbf m\rangle\longmapsto\prod_{i\in I}t_i^{m_i},
\]
under which $p_i$ acts by multiplication by $t_i$, while $z_i$ acts by the
Jackson $q^{-1}$-difference operator
\[
 D_{q^{-1},i}f
 =\frac{f(t_1,\ldots,t_i,\ldots)-
 f(t_1,\ldots,q^{-1}t_i,\ldots)}{(1-q^{-1})t_i}.
\]
\end{corollary}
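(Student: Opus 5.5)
The plan is to build the isomorphism directly from the basis of Theorem~\ref{thm:fock}(a), and then check the two operator identities on basis vectors. Define the linear map $\Phi:\mathcal F_I\to\mathbb C[t_i\mid i\in I]$ by $\Phi(|\mathbf m\rangle)=t^{\mathbf m}:=\prod_{i\in I}t_i^{m_i}$. Since the $|\mathbf m\rangle$ form a basis of $\mathcal F_I$ and the monomials $t^{\mathbf m}$ form a basis of the polynomial ring, $\Phi$ is a vector-space isomorphism. It then suffices to show $\Phi\circ p_i=t_i\cdot\Phi$ and $\Phi\circ z_i=D_{q^{-1},i}\circ\Phi$ on the basis $|\mathbf m\rangle$; linearity extends both identities to all of $\mathcal F_I$.

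For the creation operators, Theorem~\ref{thm:fock}(b) gives
\[
\Phi(p_i|\mathbf m\rangle)=\Phi(|\mathbf m+\mathbf e_i\rangle)=t_it^{\mathbf m}.
\]
This is exactly multiplication by $t_i$.

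For the annihilation operators, first compute the Jackson operator on a monomial. Only the $t_i$-variable is rescaled, so
\[
D_{q^{-1},i}t^{\mathbf m}=\frac{t^{\mathbf m}(1-q^{-m_i})}{(1-q^{-1})t_i}=[m_i]_{q^{-1}}\,t^{\mathbf m-\mathbf e_i}.
\]
When $m_i=0$ the numerator vanishes, so the result is $0$; in particular the expression is a genuine polynomial and no negative power of $t_i$ appears. Comparing with Theorem~\ref{thm:fock}(c), $z_i|\mathbf m\rangle=[m_i]_{q^{-1}}|\mathbf m-\mathbf e_i\rangle$, with the value $0$ when $m_i=0$. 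Applying $\Phi$ to both sides gives $\Phi(z_i|\mathbf m\rangle)=D_{q^{-1},i}\Phi(|\mathbf m\rangle)$.

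There is no real obstacle here. The only points needing care are the boundary case $m_i=0$ and the normalization convention of $[m]_{q^{-1}}$, namely $(1-q^{-m})/(1-q^{-1})$ rather than a symmetric $q$-number. Both match the convention fixed in Theorem~\ref{thm:fock}.
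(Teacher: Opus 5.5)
Your proposal is correct and follows the paper's argument. Both evaluate $D_{q^{-1},i}$ on monomials to get $[m_i]_{q^{-1}}t^{\mathbf m-\mathbf e_i}$, then read off the intertwining from the basis and the creation/annihilation formulas of Theorem~\ref{thm:fock}; the paper also records $D_{q^{-1},i}t_i=q^{-1}t_iD_{q^{-1},i}+1$, which is a consistency check not needed for your basis-level verification, and your explicit handling of $m_i=0$ fills in a detail the paper leaves implicit.
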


\begin{proof}
The operator $D_{q^{-1},i}$ sends $t_i^m$ to
$[m]_{q^{-1}}t_i^{m-1}$ and satisfies
\[
 D_{q^{-1},i}t_i=q^{-1}t_iD_{q^{-1},i}+1.
\]
The assertion follows from Theorem~\ref{thm:fock}.
\end{proof}

\begin{proposition}\label{prop:simple-fock}
For every vertex $i$, the rank-one Hall--Fock module
\[
 \mathcal F_i=
 \mathbb C\langle p_i,z_i\rangle/
 \mathbb C\langle p_i,z_i\rangle z_i
\]
is simple.
\end{proposition}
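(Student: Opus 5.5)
We identify $\mathcal F_i$ with $\mathbb C[t]$ via Theorem~\ref{thm:fock} and Corollary~\ref{cor:polynomial}, taking $I=\{i\}$, which is trivially Hom-orthogonal. Then $|m\rangle=p_i^m|0\rangle$ corresponds to $t^m$, $p_i$ acts by multiplication by $t$, and $z_i$ acts by $z_i|m\rangle=[m]_{q^{-1}}|m-1\rangle$. The key observation is that $q>1$ is a prime power, so $[m]_{q^{-1}}=1+q^{-1}+\cdots+q^{-(m-1)}\ge 1$ for every $m\ge1$. In particular these scalars are nonzero. This is the only place the specific value of $q$ enters: for $q$ a root of unity the argument would fail, and the module would not be simple.

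Let $N\subseteq\mathcal F_i$ be a nonzero submodule and pick $0\ne v\in N$. Expand $v=\sum_{m=0}^{d}\lambda_m|m\rangle$ with $\lambda_d\ne0$, using the basis from Theorem~\ref{thm:fock}(a). Each application of $z_i$ strictly lowers degree. Applying $z_i^d$ kills every term with $m<d$ and sends the top term to
\[
 z_i^d v=\lambda_d\,[d]_{q^{-1}}[d-1]_{q^{-1}}\cdots[1]_{q^{-1}}\,|0\rangle .
\]
The scalar here is nonzero, so $|0\rangle\in N$. Then $|m\rangle=p_i^m|0\rangle\in N$ for all $m$, so $N=\mathcal F_i$. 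Since $\mathcal F_i\neq0$ (it has the basis $|m\rangle$), $\mathcal F_i$ is simple.

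The only step that needs any care is checking that $z_i^d$ annihilates every $|m\rangle$ with $m<d$. This holds because $z_i|0\rangle=0$: the vacuum is annihilated by construction, since $z_i\in\mathcal J_i$. Together with the lowering formula of Theorem~\ref{thm:fock}(c), this makes the degree-lowering argument airtight.
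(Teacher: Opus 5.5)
Your proof is correct and follows the paper's argument: take a nonzero vector of top degree $d$, apply $z_i^d$ to get a nonzero multiple of $|0\rangle$ (using $[m]_{q^{-1}}\neq 0$ because $q>1$), and then recover all of $\mathcal F_i$ by applying powers of $p_i$. Your version is in fact more careful than the paper's, since you check explicitly that $z_i^d$ kills the lower-degree terms and that $\mathcal F_i\neq 0$; one small correction is that your side remark about roots of unity should exclude $q=1$, because there $[m]_1=m\neq 0$ in $\mathbb C$ and the module is still simple.
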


\begin{proof}
Let $0\ne f(p_i)|0\rangle$ lie in a nonzero submodule and let $d$ be the
degree of $f$. Since $q>1$, all quantum integers $[m]_{q^{-1}}$ are
nonzero. Repeated application of $z_i$ shows that the submodule contains a
nonzero scalar multiple of $|0\rangle$. Applying powers of $p_i$ then
generates all of $\mathcal F_i$.
\end{proof}

\subsection*{Comparison with categorical Hall-type Weyl actions}

Toda constructed Hall-type algebra structures on categorical
Donaldson--Thomas theories for local surfaces and actions of categorical
Hall algebras of zero-dimensional sheaves on Pandharipande--Thomas
categories \cite{Toda20}. The associated Hecke correspondences give
creation and annihilation operators on $K$-groups, and their commutator is
expressed by tensor multiplication with a universal geometric class. Toda
interprets this relation as a categorification of a Weyl algebra action.

The present construction is not a specialization of Toda's geometric
setting. Our operators are elements of a finitary Hall algebra over a finite
field, and their diagonal relation has the scalar form
\[
 z_ip_i-q^{-1}p_iz_i=1.
\]
Nevertheless, the two constructions exhibit a common Hall-theoretic
pattern: a creation correspondence, an annihilation correspondence, and a
Weyl-type correction term. In the two-term category, the correction term is
produced by the invertible endomorphisms of $P_i$; in Toda's setting, it is
encoded by a universal class on a moduli space of stable pairs. The
Hall--Fock module above gives an explicit finite-field representation model
for this broader creation--annihilation phenomenon.

\section{BGP reflection and orientation covariance}

Let $i$ be a sink of $Q$, put $Q'=\sigma_iQ$, and use primes for objects
associated with $Q'$. Let
\[
 \Sigma_i^+:\cA_i^+(Q)\xrightarrow{\sim}\cA_i^-(Q')
\]
be the usual BGP equivalence between the subcategories having no direct
summand $S_i$ and $S_i'$, respectively. Put
\[
 \cP_i(Q)=\add\{P_j\mid j\ne i\}.
\]
Define
\[
 \HH_i^+(Q)=\operatorname{span}\{c_Mz_P\mid
 M\in\cA_i^+(Q),\ P\in\cP_i(Q)\},
\]
and define $\HH_i^-(Q')$ analogously.

\begin{proposition}
[Reflection property]\label{thm:reflection}
The rule
\[
 \mathfrak R_i^+(c_Mz_P)=
 c_{\Sigma_i^+M}'z_{\Sigma_i^+P}'
\]
defines an algebra isomorphism
\[
 \mathfrak R_i^+:\HH_i^+(Q)\xrightarrow{\sim}\HH_i^-(Q').
\]
Its inverse is induced by the negative reflection functor. For $j\ne i$,
\[
 \mathfrak R_i^+(p_j)=p_j',\qquad
 \mathfrak R_i^+(z_j)=z_j',
\]
and the rank-one Weyl relation at $j$ is preserved. The mixed coefficients in
Lemma~\ref{lem:mixed} are invariant under reflection. On the Ringel--Hall
part, $\mathfrak R_i^+$ agrees with the corresponding restriction of
Lusztig's braid symmetry.
    
\end{proposition}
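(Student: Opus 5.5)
The plan is to realize $\mathfrak R_i^+$ as the map induced on Hall algebras by an exact equivalence of extriangulated subcategories, so that all structure constants transfer automatically.

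\textbf{Step 1: the categorical input.} I will check that $\Sigma_i^+$ sends $\cP_i(Q)$ into projectives of $Q'$. For $j\ne i$, $\Sigma_i^+P_j$ is projective, namely $P_j'$, because $P_j$ has no summand $S_i$ and the reflection at a sink preserves $\Hom(P_k,-)$-dimensions away from $i$. This gives $\Sigma_i^+p_j=p_j'$ and $\Sigma_i^+z_j=z_j'$ for $j\ne i$. Next I will let $\cC_i^+(Q)\subseteq\cC$ be the full subcategory of objects $C_M\oplus Z_P$ with $M\in\cA_i^+(Q)$ and $P\in\cP_i(Q)$. In $D^b(\cA)$ this is $M\oplus P[1]$, and the derived reflection equivalence $R\Sigma_i^+:D^b(\cA)\to D^b(\cA')$ sends it to $\Sigma_i^+M\oplus\Sigma_i^+P[1]$. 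Hence $R\Sigma_i^+$ restricts to an equivalence $\cC_i^+(Q)\simeq\cC_i^-(Q')$. Being a triangle equivalence, it preserves $\Hom$, $\EE$, all $\EE^{-r}$ and automorphism groups, via the identifications of Proposition~\ref{prop:decomposition}.

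\textbf{Step 2: closure of the spans.} I will show that $\HH_i^+(Q)$ is a subalgebra, i.e.\ that $\cC_i^+(Q)$ is extension-closed in $\cC$. Three kinds of extensions need checking.
\begin{itemize}
\item For $\EE(C_M,C_N)=\Ext^1(M,N)$ with $M,N\in\cA_i^+$: $S_i$ is simple projective at a sink, so $\Hom(S_i,-)$ of the middle term injects into $\Hom(S_i,M)\oplus\ldots$. More precisely, since $S_i$ is projective, a summand $S_i$ of the middle term would split off in $M$ or $N$. So $\cA_i^+$ is extension closed.
\item For split products involving $Z_P$ there is nothing to check.
\item For the mixed extensions, by Lemma~\ref{lem:middle-term} the middle term is $C_{\Coker f}\oplus Z_{\Ker f}$ for $f:P\to M$. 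Here $\Ker f\subseteq P$ has no $P_i=S_i$ summand: $\Hom(S_i,P)=0$ when $P\in\cP_i$, because a sink $i$ maps into $P_j$ only if there is a path $j\to i$. So I need $\Hom(S_i,P_j)$ to be handled carefully, which I expect to be fine. Also $\Coker f$ has no $S_i$ summand, since $\Hom(\Coker f,S_i)$ injects into $\Hom(M,S_i)$ restricted suitably.
\end{itemize}
I expect this last check to be the main obstacle. If $S_i$ appears, I would first try replacing the subcategory by its extension closure and verifying that the stated generators still span.

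\textbf{Step 3: transfer of structure constants.} By Lemma~\ref{lem:mixed} and the general Hall formula, every structure constant of the product $v_Xv_Y$ for $X,Y\in\cC_i^+$ is expressed through the quantities $|\EE(X,Y)_L|$, $|\Hom|$, $\{X,Y\}$ and $a_L$. All of these are preserved by the equivalence of Step 1, since it commutes with realizations of $\EE$-extensions. Hence $v_X\mapsto v_{R\Sigma_i^+X}$ is an algebra homomorphism. Because $R\Sigma_i^+$ is compatible with direct sums, the normalization $c_Mz_P=\lambda\,v_{C_M\oplus Z_P}$ carries the same scalar $\lambda$ on both sides. Therefore the map agrees with the stated rule on the basis of Proposition~\ref{prop:factorization}. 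The negative reflection gives the inverse. In particular, the counts $|{}_Q\Hom_A(P,M)_L|$ are preserved, since $\Sigma_i^+$ carries kernels and cokernels of maps between objects of $\cA_i^+$ to those of their images. This yields invariance of the mixed coefficients and preservation of the Weyl relation at $j$, via Theorem~\ref{thm:main}(b).

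\textbf{Step 4: Lusztig's braid symmetry.} Finally, the restriction to $\cR$ is the map $c_M\mapsto c_{\Sigma_i^+M}$ on the Hall algebra of $\cA_i^+$. By the results of \cite{XY01,DX02}, this is Lusztig's $T_i$ restricted to that subalgebra, up to the twist matching normalizations. I will verify that the square-root normalization agrees with theirs on generators $c_{S_j}$ for $j\ne i$.
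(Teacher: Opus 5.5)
\textbf{There is a genuine gap, at exactly the point you flagged in the third bullet of Step~2, and it cannot be closed.} The assertion $\Hom_A(S_i,P)=0$ for $P\in\cP_i(Q)$ is false. Since $i$ is a sink, $S_i=P_i$, and $\Hom_A(S_i,P_j)\cong\Hom_A(P_i,P_j)\neq0$ for every $j$ with an arrow $j\to i$.

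For such $j$, put $M=P_j/S_i$. Then $\Hom_A(M,S_i)\subseteq\Hom_A(P_j,S_i)=0$, so $M\in\cA_i^+(Q)$. The projection $f\colon P_j\to M$ has $\Ker f\cong P_i$ and $\Coker f=0$. By Lemma~\ref{lem:middle-term}, the product $z_jc_M$ therefore has a nonzero $z_i$-component. Concretely, for $Q\colon 1\to2$ and $i=2$, evaluating the normalized Hall product directly gives
\[
 z_1c_{S_1}=c_{S_1}z_1+(q-1)^{1/2}z_2 .
\]
Here $z_2$ is a PBW basis vector outside $\HH_2^+(Q)$, so $\HH_i^+(Q)$ is not closed under multiplication.

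Your fallback fails as well. The extension closure contains $z_i$, which by Proposition~\ref{prop:factorization} is not in the span of the stated generators.

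The same example refutes the claim in Step~3 that $\Sigma_i^+$ carries kernels and cokernels across. $\Sigma_2^+$ kills $\Ker f\cong S_2$, and $\Sigma_2^+f\colon P_1'\to P_2'$ is injective with cokernel $S_2'$, not $\Sigma_2^+(\Coker f)=0$. Accordingly $z_1'c_{P_2'}'=c_{P_2'}'z_1'+(q-1)^{1/2}c_{S_2'}'$ leaves $\HH_2^-(Q')$. The counts $|{}_Q\Hom_A(P,M)_L|$ are therefore not reflection invariant in the literal sense.

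\textbf{The obstruction lies in the statement, not only in your write-up.} The paper's proof has the same hole: it simply asserts that the restricted BGP functor preserves kernels, cokernels and extension middle terms.

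Your derived-category Step~1 is the right tool for a corrected version. Note that $R\Sigma_i^+(Z_{P_i})=R\Sigma_i^+(S_i[1])\cong S_i'$, i.e.\ $Z_{P_i}\mapsto C_{S_i'}$. Hence $R\Sigma_i^+$ maps the full subcategory of objects $C_M\oplus Z_P$ with $M\in\cA_i^+(Q)$ and $P\in\cP$ \emph{arbitrary} onto the full subcategory of objects $C_{M'}\oplus Z_{P'}$ with $M'$ arbitrary and $P'\in\cP_i(Q')$.

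Both subcategories are extension closed. On the $Q'$ side, $\Hom(P_i',P_j')=0$ for $j\neq i$ because $i$ is a source, so no $P_i'$ can occur as a kernel. Your Step~3 then yields an algebra isomorphism of the corresponding Hall subalgebras, with $z_i\mapsto c_{S_i'}'$. The Weyl relation at $j\neq i$ is preserved because all of its terms already lie in your smaller span.

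Two smaller points:
\begin{itemize}
\item $\Sigma_i^+P_j\cong P_j'$ should be justified by APR tilting, i.e.\ $\Sigma_i^+\cong\Hom_A(T,-)$ with $T=\tau^{-1}S_i\oplus\bigoplus_{j\neq i}P_j$, not by dimension counts.
\item Extension closure of $\cA_i^+$ should be argued with $\Hom_A(-,S_i)$, since a nonzero map onto the simple projective $S_i$ splits. An inclusion $S_i\subseteq X$, by contrast, does not force a summand.
\end{itemize}
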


\begin{proof}
The restricted BGP functor is an exact equivalence and preserves Hom spaces,
Ext spaces, automorphism groups, kernels, cokernels, and extension middle
terms. Hence it preserves every factor in the normalized Hall coefficient
and in Lemma~\ref{lem:mixed}. It sends $P_j$ to $P_j'$ for $j\ne i$, which
gives the formulas on the projective Weyl pairs. The identification with
Lusztig's symmetry is the classical Hall interpretation of BGP reflection
\cite{XY01,DX02}.
\end{proof}

\begin{remark}
The reflected vertex is excluded because the ordinary BGP functor kills the
exceptional simple there. A full braid-group action requires a double or
modified Hall enlargement.
\end{remark}

\section*{Acknowledgement}
The first author is supported by the Natural Science Foundation of Fujian Province
(Grant No. 2024J01361). 
The second author is supported by National Natural Science Foundation of China (Grant No.  12401049).


\begin{thebibliography}{99}
\bibitem{AIR14}
T. Adachi, O. Iyama and I. Reiten,
\textit{$\tau$-tilting theory},
Compos. Math. \textbf{150} (2014), 415--452.

\bibitem{BGP73}
I. N. Bernstein, I. M. Gel'fand and V. A. Ponomarev,
\textit{Coxeter functors and Gabriel's theorem},
Russian Math. Surveys \textbf{28} (1973), 17--32.

\bibitem{B13}
T. Bridgeland,
\textit{Quantum groups via Hall algebras of complexes},
Ann. of Math. \textbf{177} (2013), 739--759.

\bibitem{DX02}
B. Deng and J. Xiao,
\textit{Ringel--Hall algebras and Lusztig's symmetries},
J. Algebra \textbf{255} (2002), 357--372.

\bibitem{Gar24}
M. Garcia,
\textit{On algebraic and geometric aspects of the category of projective presentations},
Ph.D. thesis, Universit\'e Paris-Saclay, 2024.

\bibitem{Gr95}
J. A. Green,
\textit{Hall algebras, hereditary algebras and quantum groups},
Invent. Math. \textbf{120} (1995), 361--377.

\bibitem{NP19}
H. Nakaoka and Y. Palu,
\textit{Extriangulated categories, Hovey twin cotorsion pairs and model structures},
Cah. Topol. G\'eom. Diff\'er. Cat\'eg. \textbf{60} (2019), 117--193.

\bibitem{R90}
C. M. Ringel,
\textit{Hall algebras and quantum groups},
Invent. Math. \textbf{101} (1990), 583--591.

\bibitem{R96}
C. M. Ringel,
\textit{PBW-bases of quantum groups},
J. Reine Angew. Math. \textbf{470} (1996), 51--88.

\bibitem{WWZ22}
L. Wang, J. Wei and H. Zhang,
\textit{Hall algebras of extriangulated categories},
J. Algebra \textbf{610} (2022), 366--390.

\bibitem{Toda20}
Y. Toda,
\textit{Hall-type algebras for categorical Donaldson--Thomas theories on local surfaces},
Selecta Math. (N.S.) \textbf{26} (2020), Paper No.~64, 72 pp.

\bibitem{XY01}
J. Xiao and S. Yang,
\textit{BGP-reflection functors and Lusztig's symmetries: a Ringel--Hall algebra approach to quantum groups},
J. Algebra \textbf{241} (2001), 204--246.
\end{thebibliography}
\end{document}